\documentclass[12pt, reqno]{amsart}
\usepackage{amsmath, amsfonts, amsbsy, amsthm, amscd, graphicx}
\usepackage{amssymb, latexsym, color, comment}
\usepackage{bm}
\usepackage{enumerate}
\usepackage[svgnames,psnames]{xcolor}
\usepackage[colorlinks, citecolor=Green,linkcolor=FireBrick,linktocpage,unicode]{hyperref}

\theoremstyle{definition}
\numberwithin{equation}{section}
\newtheorem{theo}{Theorem}[section]

\newtheorem{lemm}[theo]{Lemma}
\newtheorem{rema}[theo]{Remark}

\begin{document}

\title{Second Variation Formula for the Laplace Eigenvalue Functional on Closed Manifolds}
\author{Kazumasa Narita}
\thanks{National Institute of Technology, Yonago College, Yonago, Tottori, 683-8502, Japan, narita@yonago-k.ac.jp, ORCID iD: 0009-0002-0833-5721}
\date{}

\maketitle

\begin{abstract} For a closed Riemannian manifold $(M,g)$ of dimension $n$, let $\lambda_{1}(g)$ be the first positive eigenvalue of the Laplace--Beltrami operator $\Delta_{g}$ and $\mbox{Vol}(M,g)$ the volume of $(M, g)$. Considering the scale-invariant quantity $\lambda_{k}(g)\mbox{Vol}(M,g)^{2/n}$  as a functional over all the metrics in a fixed conformal class, we derive a second variation formula for the functional. As a corollary, we prove that if the canonical flat metric on a torus is such that the multiplicity of $\lambda_{1}$ is two, then the flat metric is not a maximal point of the functional in its conformal class. This is a higher dimensional extension of Karpukhin's very recent work.
\end{abstract}

\textbf{Keywords} Laplacian eigenvalue $\cdot$ second variation $\cdot$ conformal class $\cdot$ flat torus

\textbf{Mathematics Subject Classification} 58J50

\section{Introduction}
Let $M$ be a (connected) closed manifold of dimension $n$. Given a Riemannian metric $g$ on $M$, the volume $\mbox{Vol}(M,g)$ and the Laplace--Beltrami operator $\Delta_{g}$ are defined. When there is no room for confusion, we sometimes abbreviate $\Delta_{g}$ as $\Delta$. Let $0 = \lambda_{0}(g) < \lambda_{1}(g) \leq  \lambda_{2}(g) \leq \cdots \leq \lambda_{k}(g) \leq \cdots$ be the eigenvalues of $\Delta_{g}$. The quantity $\overline{\lambda_{k}}(M, g) := \lambda_{k}(g)\mbox{Vol}(M,g)^{2/n}$ is invariant under scaling of the metric $g$. Set
\begin{equation*}
\Lambda_{k}(M, [g]) := \sup_{h \in [g]} \overline{\lambda_{k}}(M, h) \quad \mbox{and} \quad \Lambda_{k}(M) := \sup_{g} \overline{\lambda_{k}}(M, g),
\end{equation*}
where the supremum of the second quantity is taken over all the Riemannian metrics on $M$. When $M$ is fixed, we consider $\overline{\lambda_{k}}$ as a functional over all the Riemannian metrics on $M$.

When $M$ is a closed surface, $\Lambda_{1}(M)$ is bounded by a constant depending on its genus \cite{YY, Karpukhin-1}. Hersch \cite{Hersch} proved that on a $2$-dimensional sphere $S^{2}$, $\Lambda_{1}(S^{2})$ is realized precisely when the metric is round. For a 2-dimensional torus $T^{2}$, the set of its conformal classes is parametrized by the canonical flat metrics $g_{a,b}$ induced by the factorization of $\mathbf{R}^{2}$ by the lattice $\Gamma_{a,b} := \mathbf{Z}(1,0) \oplus \mathbf{Z}(a,b)$ $(0\leq a \leq 1/2, a^{2}+b^{2} \geq 1)$.  Nadirashvili \cite{Nadirashvili} proved that $\Lambda_{1}(T^{2})$ is realized precisely when the metric is the flat equilateral metric $g_{1/2, \sqrt{3}/2}$. El Soufi and Ilias \cite{EI-conformal} proved that each flat metric $g_{a,b}$ is a critical point of the functional $\overline{\lambda_{1}}\restriction_{[g_{a,b}]}$. In fact, El Soufi, Ilias and Ros \cite{EIR} proved that $\Lambda_{1}(T^{2}, [g_{a,b}])$ is realized precisely when $a^{2}+b^{2}=1$ and the metric is $g_{a,b}$. In particular, if $a^{2}+b^{2}>1$, then $g_{a,b}$ is a critical point of $\overline{\lambda_{1}}\restriction_{[g_{a,b}]}$, but not a maximum point. Even though it is not a maximum point, one might expect that it is a (local) maximal point of $\overline{\lambda_{1}}\restriction_{[g_{a,b}]}$. Against the expectation, Karpukhin \cite{Karpukhin} recently obtained a second variation formula for $\overline{\lambda_{1}}$ restricted to a conformal class on a closed surface, and proved the following:
\begin{theo}[\cite{Karpukhin}]
\label{Karpukhin's-Main-Theorem}
\textit{
 If $a^{2}+b^{2}>1$, then $g_{a,b}$ is not a maximal point of $\overline{\lambda_{1}}\restriction_{[g_{a,b}]}$.
 }
 \end{theo}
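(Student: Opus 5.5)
We plan to prove Theorem \ref{Karpukhin's-Main-Theorem} by computing the second variation of $\overline{\lambda_{1}}$ along a conformal deformation $g_{t} = e^{2tf}g_{a,b}$ (or, more conveniently, $g_t=(1+tf)g_{a,b}$) and exhibiting a direction $f$ along which the eigenvalue branch decreases at second order strictly slower than something else increases, i.e.\ along which $\overline{\lambda_{1}}(g_t)>\overline{\lambda_{1}}(g_{a,b})$ for small $t\neq 0$. Since $g_{a,b}$ is already known to be critical \cite{EI-conformal}, the first-order term vanishes and everything is decided by the second-order term.

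\emph{Step 1: the eigenspace.} For $a^{2}+b^{2}>1$, $\lambda_{1}(g_{a,b})=4\pi^{2}$ with eigenspace $E$ spanned by $\cos 2\pi x_2/b\cdots$; more precisely, the shortest dual lattice vector $w$ is unique up to sign, so $E=\mathrm{span}\{\cos 2\pi\langle w,x\rangle,\sin 2\pi\langle w,x\rangle\}$ has dimension two. \emph{Step 2: perturbation theory.} In dimension two the Laplacian transforms as $\Delta_{e^{2\phi}g}=e^{-2\phi}\Delta_g$, so eigenvalues of $g_t$ are eigenvalues of the generalized problem $\Delta u=\lambda e^{2tf}u$. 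By analytic (Kato--Rellich) perturbation theory, the two branches $\lambda(t)$ emanating from $4\pi^2$ are analytic; their first derivatives are the eigenvalues of the quadratic form $u\mapsto -2\lambda\int f u^{2}$ on $E$, and their second derivatives are given by the standard formula involving $\int f^{2}u^{2}$ and the resolvent term $\langle (\Delta-\lambda)^{-1}P^{\perp}(fu), fu\rangle$. We will choose $f$ with $\int f\,dA=0$ (so volume is fixed to first order; we then normalize by $\mathrm{Vol}$ exactly) and $\int f u^{2}=0$ for all $u\in E$, which makes both branches critical and splitting only at second order; this is automatic if $f$ has Fourier support avoiding $0$ and $\pm 2w$.

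\emph{Step 3: the test direction.} Take $f=\cos 2\pi\langle v,x\rangle$ for a dual lattice vector $v$. Then $fu$ has Fourier modes $\pm w\pm v$, and the resolvent term becomes an explicit sum of terms $\lambda^{2}/(4\pi^{2}|w\pm v|^{2}-4\pi^{2}|w|^{2})$. Since $w$ is the unique (up to sign) shortest nonzero dual vector, every $|w\pm v|^2>|w|^2$ unless $w\pm v\in\{0,\pm w\}$. Choosing $v$ so that $|w\pm v|^{2}$ is only slightly larger than $|w|^{2}$ (possible precisely because $a^2+b^2>1$ gives room: e.g.\ $v$ the second shortest dual vector, with $|w-v|$ close to $|w|$) makes the resolvent contribution to $\lambda''$ large and positive for the lower branch, dominating the $-\int f^{2}u^{2}$ and volume-correction terms. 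Then $\lambda_1(g_t)$, being the minimum of the two branches, still has positive second derivative when both branches do, giving $\overline{\lambda_1}(g_t)>\overline{\lambda_1}(g_{a,b})$.

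The main obstacle is Step 3: checking that for \emph{every} $(a,b)$ with $a^2+b^2>1$ (not just near the boundary) one can find $f$ (possibly a combination of several modes, or a non-trigonometric bump) making the $2\times2$ second-order matrix positive definite on $E$, so that both branches, not merely one, increase. I would first try single modes $v$ and compute the $2\times2$ matrix explicitly, diagonal by symmetry $x\mapsto -x$; if a single mode fails far from the boundary, I would use a sum over many $v$ so the resolvent terms accumulate.
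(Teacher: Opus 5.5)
Your overall scheme is the $n=2$ case of Lemmas~\ref{first-lemma}, \ref{lemma-T} and Theorem~\ref{Main-Theorem}: a conformal direction with $\int f=0$ and $\int fuv=0$ for $u,v\in E$ (so the first variation vanishes), $\lambda_1(g_t)$ as the minimum of two analytic branches, and volume normalization. Step~3, however, fails, because you have the sign of the resolvent contribution backwards. Under your conditions, for $g_t=e^{2tf}g$ and $V=\mathrm{Vol}(g)$, one has $\overline{\lambda_1}(g_t)=\overline{\lambda_1}(g)+\frac{t^2}{2}\min_{\|u\|=1}Q(u)+o(t^2)$, where
\[
Q(u)=4\lambda_1\int f^2\bigl(\|u\|^2-Vu^2\bigr)+8\lambda_1\Bigl(\int fu\Bigr)^2-8\lambda_1^2V\sum_{\lambda_i>\lambda_1}\frac{\|\Pi_i(fu)\|^2}{\lambda_i-\lambda_1}.
\]
This is $V\langle u,T_{1,\varphi}u\rangle+\lambda_1\|\varphi\|^2\|u\|^2$ with $\varphi=2f$, rewritten. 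The first term combines the positive volume correction $4\lambda_1\int f^2$ with $-4\lambda_1V\int f^2u^2$. The middle term is the resolvent contribution of the eigenvalue $0$. Eigenvalues above $\lambda_1$ push the branch \emph{down}, so taking $|w\pm v|$ only slightly larger than $|w|$ makes the last term large and negative. In fact no nonzero $f\perp E$ can work. This covers your second-shortest $v$ and also your fallback of summing many modes $v\neq\pm w$. For such $f$ the middle term vanishes. The first term has zero trace on $E$ because $u_1^2+u_2^2\equiv 2/V$. The last term is negative definite, since $fu$ lies entirely in eigenspaces above $\lambda_1$. Hence $\min Q\le\frac12\mathrm{tr}\,Q<0$. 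For a single mode $v\notin\{0,\pm w,\pm 2w\}$ the first term even vanishes identically, and both branches of $\overline{\lambda_1}$ decrease.

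The missing idea is that the only possible source of positivity is the eigenvalue $0$ below $\lambda_1$, i.e.\ the term $8\lambda_1(\int fu)^2$. So $f$ must have a component in $E$ itself. The case $w\pm v=0$ that you set aside is exactly the one that works.

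The paper takes $\varphi=u_1=\sqrt2\,|\det A|^{-1/2}\sin(2\pi w\cdot x)$ and $g_t=e^{t\varphi}g$, i.e.\ Theorem~\ref{Main-Theorem-Tori} with $n=2$. Multiplicity two is where $a^2+b^2>1$ enters, not through ``room'' for $|w-v|\approx|w|$. It gives $\int\varphi uv=0$ on $E$, since $\varphi uv$ has only the modes $\pm w,\pm 3w$, while $\varphi u$ has only the modes $0$ and $\pm 2w$ (eigenvalues $0$ and $4\lambda_1$). One finds $T_{1,\varphi}$ diagonal in $\{u_1,u_2\}$ with eigenvalues $\lambda_1/(6V)$ and $-5\lambda_1/(6V)$, so $\alpha_{1,\varphi}=-\frac56\lambda_1+\lambda_1=\lambda_1/6>0$. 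In your normalization with $f=\sin 2\pi\langle w,x\rangle$ this reads $Q(u_1)=\frac73\lambda_1V$ and $Q(u_2)=\frac13\lambda_1V$.

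Note that the $u_2$-branch of $\lambda_1$ itself decreases; it is the volume term that makes the normalized functional increase. So what must be positive on both branches is $Q$, not $\lambda''$. Also, $\lambda_1(g_{a,b})=4\pi^2|w|^2=4\pi^2/b^2$, not $4\pi^2$.
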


In this paper, we extend this Karpukhin's work to higher dimensions. On any closed manifold $M$ of dimension $n\geq 3$, there exists a one-parameter family $(g_{t})_{t>0}$ such that $\overline{\lambda_{1}}(M, g_{t})$ goes to infinity with $t$ \cite{CD}. Hence it is meaningless to consider $\Lambda_{k}(M)$. On the other hand, $\Lambda_{1}(M, [g])$ is always bounded by a constant \cite{EI-conformal-volume, LY}. With extra assumptions of $(M,[g])$, $\Lambda_{k}(M, [g])$ is also bounded by a constant $C(n,k)$ for any $k$ \cite{Korevaar, Hassannezhad}. In fact, El Soufi and Ilias \cite{EI-conformal-volume} proved that $\Lambda_{1}(S^{n}, [g_{\mathrm{can}}])$ is realized precisely by the round metric $g_{\mathrm{can}}$. This can be seen as a higher dimensional extension of aforementioned Hersch's work. Furthermore, they proved the following theorem in a separate paper \cite{EI-conformal}: 
\begin{theo}[\cite{EI-conformal}]
\label{EI-torus}
 Let $\Gamma$ be a lattice in $\mathbf{R}^{n}$ and $c(\Gamma^{*})$ the infimum of $\sum_{1\leq i \leq n} |w_{i}|^{2}$, where $\{w_{i}\}_{i=1}^{n}$ runs over all the basis of the dual lattice $\Gamma^{*}$. Set $T^{n}_{\Gamma} = \mathbf{R^{n}}/\Gamma$ and let $g_{\Gamma}$ be the canonical flat metric on $T^{n}_{\Gamma}$. Then one has
 \begin{equation*}
 \Lambda_{1}(T^{n}_{\Gamma}, [g_{\Gamma}]) \leq \frac{4\pi^{2}}{n}c(\Gamma^{*})
 \end{equation*}
 and the equality holds if and only if there exists a basis $\{w_{i}\}_{i=1}^{n}$ of $\Gamma^{*}$ such that $|w_{1}|^{2} = \cdots |w_{n}|^{2} = c(\Gamma^{*})/n$.
 \end{theo}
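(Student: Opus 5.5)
The plan is to bound $\lambda_{1}(h)$ for an arbitrary $h=e^{2u}g_{\Gamma}$ in the conformal class, using test functions built from the coordinate circle maps of the torus. This is the Hersch-type argument with the sphere replaced by a product of circles.

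\textbf{Step 1: a basis and the test maps.} Let $\{w_{i}\}_{i=1}^{n}$ be a basis of $\Gamma^{*}$ attaining $c(\Gamma^{*})$. The infimum is attained because the set of bases with bounded total length is finite modulo sign changes. For each $i$, the map $\phi_{i}(x)=e^{2\pi\sqrt{-1}\langle w_{i},x\rangle}$ is well defined on $T^{n}_{\Gamma}$, and $|d\phi_{i}|^{2}_{g_{\Gamma}}=4\pi^{2}|w_{i}|^{2}$. Its real and imaginary parts are $\cos(2\pi\langle w_{i},x\rangle)$ and $\sin(2\pi\langle w_{i},x\rangle)$. These need to be made $L^{2}(h)$-orthogonal to constants. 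I would do this by translation: replace $x$ by $x+a$, or equivalently compose with a rotation of each circle factor. Translations of $T^{n}$ are isometries of $g_{\Gamma}$ but not of $h$. The function $a\mapsto\int \phi_{i}(x+a)\,dv_{h}$ is the $h$-weighted center of mass on the circle. If it is nonzero one can only rotate it, not kill it. So the right tool is a conformal (M\"obius) map of the disk applied to each circle-valued map. That is, compose $\phi_{i}$ with a M\"obius transformation $\gamma_{i}$ of $S^{1}\subset\mathbf{C}$ so that $\int\gamma_{i}\circ\phi_{i}\,dv_{h}=0$; Hersch's lemma guarantees such a $\gamma_{i}$ exists.

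\textbf{Step 2: the Rayleigh quotient.} With $\psi_{i}=\gamma_{i}\circ\phi_{i}$, $|\psi_{i}|=1$, and the Rayleigh quotient gives
\begin{equation*}
\lambda_{1}(h)\,\mathrm{Vol}(M,h)=\lambda_{1}(h)\int|\psi_{i}|^{2}dv_{h}\le\int|d\psi_{i}|^{2}_{h}dv_{h}.
\end{equation*}
Summing over $i$ gives
\begin{equation*}
n\,\lambda_{1}(h)\mathrm{Vol}(M,h)\le\sum_{i}\int|d\psi_{i}|_{h}^{2}dv_{h}.
\end{equation*}
The Dirichlet energy $\int|d\psi|^{2}_{h}dv_{h}=\int e^{(n-2)u}|d\psi|^{2}_{g}dv_{g}$ is not conformally invariant for $n\ge3$. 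I would therefore apply H\"older's inequality:
\begin{equation*}
\int|d\psi|_{h}^{2}dv_{h}\le\Big(\int|d\psi|_{g}^{n}dv_{g}\Big)^{2/n}\mathrm{Vol}(M,h)^{1-2/n}.
\end{equation*}
Since $\sum_{i}|d\psi_{i}|_{g}^{2}$ is not of the form $|\cdot|^{n}$, it is better to apply H\"older once to the sum:
\begin{equation*}
\sum_{i}\int|d\psi_{i}|_{h}^{2}dv_{h}=\int\Big(\sum_{i}|d\psi_{i}|^{2}_{g}\Big)e^{(n-2)u}dv_{g}\le\Big\|\sum_{i}|d\psi_{i}|_{g}^{2}\Big\|_{L^{n/2}(g)}\mathrm{Vol}(M,h)^{1-2/n}.
\end{equation*}
This yields
\begin{equation*}
n\,\overline{\lambda_{1}}(h)\le\Big\|\sum_{i}|d\psi_{i}|_{g}^{2}\Big\|_{L^{n/2}(g)}.
\end{equation*}

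\textbf{Step 3: the main obstacle.} The M\"obius maps $\gamma_{i}$ distort the energy, so I must show that
\begin{equation*}
\Big\|\sum_{i}|d(\gamma_{i}\circ\phi_{i})|^{2}_{g}\Big\|_{L^{n/2}(g)}\le 4\pi^{2}\sum_{i}|w_{i}|^{2}\,\mathrm{Vol}(g)^{2/n}.
\end{equation*}
I expect this to be the hard part. For $n=2$ it is exact by conformal invariance, but for $n\ge3$ it is not obvious. The approach I would try first is the circle-bundle structure. The factor $|\gamma_{i}'|$ depends only on the $i$-th coordinate $\theta_{i}=\langle w_{i},x\rangle$. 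Choosing the $\gamma_{i}$ by a degree/fixed-point argument over the product of disks $D^{n}$, rather than factor by factor, should allow convexity (Jensen's inequality in each $\theta_{i}$) to show that the optimal choice is the identity on average. Failing that, I would replace the M\"obius balancing with the El Soufi--Ilias conformal-volume approach: the conformal volume of the product map $T^{n}\to(S^{1})^{n}$, with $h$-balancing achieved by the full conformal group of an enclosing sphere.

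\textbf{Step 4: the equality case.} If equality holds, every inequality above is an equality. H\"older's inequality forces $\sum_{i}|d\psi_{i}|^{2}_{g}$ to be proportional to $e^{2u}$. Each $\psi_{i}$ must then be a $\lambda_{1}(h)$-eigenfunction, which with flatness forces $u$ constant. With $h=g$, the functions $e^{2\pi\sqrt{-1}\langle w_{i},x\rangle}$ are all first eigenfunctions, hence $|w_{i}|^{2}=\lambda_{1}(g_{\Gamma})/4\pi^{2}$ for all $i$, and each equals $c(\Gamma^{*})/n$. Conversely, if $|w_{1}|^{2}=\cdots=|w_{n}|^{2}=c(\Gamma^{*})/n$, then $g_{\Gamma}$ attains the bound. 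This uses that a shortest dual vector then has length$^{2}$ equal to $c/n$, by the minimality of $c$ under exchanging basis vectors.
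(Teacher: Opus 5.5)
Your Step 3 is not just the hard step. The inequality you want there is false, so balancing each circle factor separately cannot work.

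For a M\"obius map $\gamma$ of the disk, the induced circle diffeomorphism has $\int_0^{2\pi}|\gamma'|\,d\theta=2\pi$. By Cauchy--Schwarz, $\frac{1}{2\pi}\int_0^{2\pi}|\gamma'|^2\,d\theta\ge 1$, with equality only for rotations. Now $|d(\gamma_i\circ\phi_i)|_g^2=4\pi^2|w_i|^2\,|\gamma_i'\circ\phi_i|^2$, and $\phi_i$ pushes $dv_g$ forward to $\frac{\mathrm{Vol}(g)}{2\pi}\,d\theta$. Hence $\int|d\psi_i|_g^2\,dv_g\ge 4\pi^2|w_i|^2\,\mathrm{Vol}(g)$. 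H\"older ($\|F\|_{L^{n/2}}\ge\mathrm{Vol}(g)^{2/n-1}\|F\|_{L^1}$) then gives $\bigl\|\sum_i|d\psi_i|_g^2\bigr\|_{L^{n/2}(g)}\ge 4\pi^2\sum_i|w_i|^2\,\mathrm{Vol}(g)^{2/n}$. The inequality is strict as soon as one $\gamma_i$ is not a rotation. That is forced whenever the $h$-barycenter of some $\phi_i$ is nonzero, which holds for generic $h\in[g_\Gamma]$.

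So Jensen runs the wrong way. No degree argument can rescue it, since each $\gamma_i$ is determined up to rotation by the requirement of balancing $\phi_i$. Your remark that $n=2$ is ``exact by conformal invariance'' is also wrong. Two-dimensional invariance concerns the domain metric, whereas $\phi_i$ is not a conformal map, so post-composing it with a M\"obius map increases its energy.

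The argument that works is the one you mention only as a fallback, and all the content lives there. (The paper itself just cites El Soufi--Ilias for this theorem.)

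Take $\Phi=n^{-1/2}(\phi_1,\dots,\phi_n)\colon T^n_\Gamma\to S^{2n-1}\subset\mathbf{C}^n$. Because $\{w_i\}$ is a basis of $\Gamma^*$, $\Phi$ is an affine diffeomorphism onto the Clifford torus $C=S^1(n^{-1/2})^n$, which is minimal in $S^{2n-1}$. Balance with a single conformal transformation $\gamma$ of $S^{2n-1}$. Then:
(a) pointwise, $|d(\gamma\circ\Phi)|_g^2=|\gamma'\circ\Phi|^2\cdot\frac{4\pi^2}{n}\sum_i|w_i|^2$;
(b) since $\Phi$ is affine, $\Phi_*dv_g=\frac{\mathrm{Vol}(g)}{\mathrm{Vol}(C)}\,dv_C$, so $\int|\gamma'\circ\Phi|^n\,dv_g=\frac{\mathrm{Vol}(g)}{\mathrm{Vol}(C)}\mathrm{Vol}(\gamma(C))$;
(c) by Li--Yau and El Soufi--Ilias, a minimal submanifold of the sphere has maximal volume in its conformal orbit, so this is $\le\mathrm{Vol}(g)$.
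The key point is that the H\"older exponent $n$ equals $\dim C$. That is exactly what fails on each one-dimensional circle factor.

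This yields $\overline{\lambda_1}(h)\le\frac{4\pi^2}{n}\sum_i|w_i|^2\,\mathrm{Vol}(g_\Gamma)^{2/n}$. The volume factor you carry is needed; the printed inequality is only scale-consistent for covolume one.

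The rigidity in Step 4 should also come from (c). Since $C$ is not totally geodesic, equality forces $\gamma$ to be an isometry. The H\"older and Rayleigh equality cases then force $u$ to be constant. Your claim that ``flatness forces $u$ constant'' has no argument behind it.

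Finally, the converse needs $\min_{v\in\Gamma^*\setminus\{0\}}|v|^2=c(\Gamma^*)/n$, and your exchange argument does not give this. Replacing $w_j$ by a shorter $v=\sum_ia_iw_i$ yields a basis only when $a_j=\pm1$.
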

 This can be seen as a higher dimensional extension of the aforementioned work of El Soufi, Ilias and Ros \cite{EIR} and the equality condition in Theorem \ref{EI-torus} is precisely the condition $a^{2}+b^{2}=1$ in dimension two. In this paper, we extend Karpukhin's second variation formula to higher dimensions (Theorem \ref{Main-Theorem}) and prove the following:
\begin{theo}[Theorem \ref{Main-Theorem-Tori}]
\textit{
 Let $\Gamma$ be a lattice in $\mathbf{R}^{n}$. Set $T^{n}_{\Gamma} = \mathbf{R^{n}}/\Gamma$ and let $g_{\Gamma}$ be the canonical flat metric on $T^{n}_{\Gamma}$. Assume that there exists $w \in \Gamma^{*} \setminus \{ 0 \}$ such that the inequality $|w|<|v|$ holds for any $v \in \Gamma^{*} \setminus \{0, \pm w \}$. Then the canonical flat metric $g_{\Gamma}$ is not a maximal point of $\overline{\lambda_{1}}\restriction_{[g_{\Gamma}]}$.
}
\end{theo}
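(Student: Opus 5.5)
The plan is to follow Karpukhin's strategy in dimension two. Since the second variation formula (Theorem \ref{Main-Theorem}) appears later in the paper, I will derive it myself and then apply it to $g_{\Gamma}$.

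\emph{Step 1: the eigenvalue branch.} Consider conformal perturbations $g_{t}=e^{2tf}g_{\Gamma}$ with $\int f\,dv_{g_\Gamma}=0$, and write $E$ for the $\lambda_1$-eigenspace. Under the hypothesis, $\lambda_{1}(g_\Gamma)=4\pi^{2}|w|^{2}$ and $E=\mathrm{span}\{\cos 2\pi\langle w,x\rangle,\sin 2\pi\langle w,x\rangle\}$, so $\dim E=2$. By Kato--Rellich theory for the analytic family $\Delta_{g_t}$, there are two analytic eigenvalue branches $\Lambda_{1}(t),\Lambda_{2}(t)$ emanating from $\lambda_{1}$, and $\lambda_{1}(g_t)=\min\Lambda_i(t)$ for small $t$. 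Write $u=e^{2tf}$ for the volume density factor, so that $\overline{\lambda_{1}}(g_t)=\lambda_1(g_t)\bigl(\int e^{ntf}\bigr)^{2/n}$.

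\emph{Step 2: expansion.} I will compute the first and second derivatives of the branches. The conformal Laplacian is $\Delta_{g_t}\phi=e^{-2tf}\bigl(\Delta\phi-(n-2)t\langle\nabla f,\nabla\phi\rangle\bigr)$. The first-order term is the quadratic form $Q_{1}(\phi)=\int\bigl(-2f\lambda_1\phi^2+\ldots\bigr)$ restricted to $E$. For $\phi=\cos$ or $\sin$ of $2\pi\langle w,x\rangle$, the Fourier content of $\phi^{2}$ and $\phi_1\phi_2$ lives only in the frequencies $0$ and $\pm2w$. Hence, if $\hat f(2w)=0$ and $\hat f(0)=0$, the restricted form combined with the volume normalization vanishes identically, and both branches are critical to first order; this is the El Soufi--Ilias criticality. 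The second-order coefficient then has the standard form
\[
\ddot\Lambda = \langle \ddot L\phi,\phi\rangle - 2\langle (L-\lambda_1)^{-1}P^{\perp}\dot L\phi,\dot L\phi\rangle,
\]
together with the volume contribution, which is $\frac{2}{n}\lambda_1 n^2\int f^2/\mathrm{Vol}$ up to sign bookkeeping.

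\emph{Step 3: choosing $f$.} The aim is a single Fourier mode $f=\cos 2\pi\langle v,x\rangle$ with $v\in\Gamma^*\setminus\{0,\pm2w\}$, and then to show that $\ddot\Lambda_i>0$ for both branches. A positive second derivative means $\overline{\lambda_1}$ increases along $g_t$ in both directions, so $g_\Gamma$ is not a local maximum; by first-order criticality this is the whole issue. Because $f\phi$ only involves frequencies $v\pm w$, the resolvent term can be computed explicitly, with denominators $4\pi^2(|v\pm w|^2-|w|^2)$. This is exactly where the strict inequality $|w|<|v'|$ for $v'\ne\pm w$ is needed: it guarantees that all these denominators are positive whenever $v\pm w\ne\pm w$. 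Those denominators enter with a negative sign, so they increase $\ddot\Lambda$. Choosing $v$ large, or alternatively $v$ parallel to $w$ and short relative to it, should make the resolvent term dominate the local terms. That would show $\ddot\Lambda>0$ for a suitable $v$, and the symmetric choice of cosine handles both branches simultaneously.

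\emph{Main obstacle.} The hard step is the sign computation of $\ddot\Lambda$. It has three competing contributions: the $\ddot L$ term, the dimension-dependent $(n-2)$ gradient term, and the volume term. I expect they combine into an expression like
\[
\sum_{\pm}\frac{c_n(v,w)}{|v\pm w|^2-|w|^2}-d_n|v|^2,
\]
and I would check the limit $|v|\to\infty$ first. If that limit fails, I will fall back on Karpukhin's trick of taking $f$ a combination of modes chosen so that the off-diagonal coupling on $E$ vanishes.
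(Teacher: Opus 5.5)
\textbf{Verdict: there is a genuine gap.} Your framework is the right one and matches the paper's: a conformal direction $f$ with $\hat f(0)=\hat f(\pm 2w)=0$, analytic branches, then second-order perturbation plus the volume term. Step 3 fails, however, because you have the sign of the resolvent term backwards. In $\langle \ddot L\phi,\phi\rangle-2\langle (L-\lambda_1)^{-1}P^{\perp}\dot L\phi,\dot L\phi\rangle$, every Fourier mode $k$ with $4\pi^{2}|k|^{2}>\lambda_{1}$ contributes $-2|\langle \dot L\phi,e_k\rangle|^{2}/(4\pi^{2}|k|^{2}-\lambda_{1})\le 0$. Coupling to higher levels pushes the eigenvalue \emph{down}. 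The only mode that pushes it up is the constant mode, with denominator $-\lambda_{1}$. That is exactly the case $v\pm w=0$ which your claim ``all denominators are positive whenever $v\pm w\neq\pm w$'' overlooks.

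Carrying out your computation for $f=\cos 2\pi\langle v,x\rangle$ with $v\notin\{0,\pm w,\pm 2w\}$ and $g_t=e^{2tf}g_\Gamma$, the two branches coincide and
\begin{equation*}
\left.\frac{d^{2}}{dt^{2}}\overline{\lambda_{1}}(g_{t})\right|_{t=0}=\mathrm{Vol}(T^{n}_{\Gamma},g_{\Gamma})^{2/n}\Big[(2-n)\lambda_{1}-\frac{1}{2}\sum_{\pm}\frac{\beta_{\pm}^{2}}{\mu_{\pm}-\lambda_{1}}\Big],
\end{equation*}
where $\mu_{\pm}=4\pi^{2}|w\pm v|^{2}>\lambda_{1}$ and $\beta_{\pm}=-2\lambda_{1}+\frac{n-2}{2}\bigl(\mu_{\pm}-\lambda_{1}-4\pi^{2}|v|^{2}\bigr)$. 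Here the non-resolvent terms give $2(1-n)\lambda_{1}$, the volume term gives $n\lambda_{1}$, and the resolvent term is $\le 0$. The total is strictly negative for every $n\ge 2$; for $n=2$ this is because $\beta_{\pm}=-2\lambda_{1}\neq 0$. So neither a generic mode nor ``$v$ large'' can work, and your fallback does not identify a direction that does.

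\textbf{The missing idea} is to take $f$ to be a $\lambda_{1}$-eigenfunction itself, i.e.\ $v=\pm w$. This is what ``$v$ parallel to $w$ and short relative to it'' forces, but you never analyze it. Then $v\pm w\in\{0,\pm 2w\}$, and three things change:
\begin{enumerate}
\item $f^{2}\phi^{2}$ picks up a resonant $\cos 4\pi\langle w,x\rangle$ correlation.
\item The constant mode enters with its negative denominator.
\item The two branches split.
\end{enumerate}
In your normalization one gets $\ddot\Lambda=\frac{\lambda_{1}}{3}(n^{2}+n-5)$ on $\cos 2\pi\langle w,x\rangle$ and $\frac{\lambda_{1}}{3}(n^{2}-5n+1)$ on $\sin 2\pi\langle w,x\rangle$. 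Adding $n\lambda_{1}$ to the smaller one gives a second variation of $\frac{(n-1)^{2}\lambda_{1}}{3}\mathrm{Vol}^{2/n}>0$.

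This is exactly the paper's proof. It sets $\varphi=u_{1}$ and checks $(\ref{condition})$, using that all relevant products have frequencies in $\{0,\pm 2w\}$. It then evaluates $T_{1,\varphi}$ on $u_1,u_2$ via the triple-angle and product-to-sum identities, and finds $\alpha_{1,\varphi}=\frac{(n-1)^{2}\lambda_{1}}{6}|\mathrm{det}A|^{(2-n)/n}>0$.
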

In dimension two, the assumption in the above theorem is precisely the condition $a^{2}+b^{2}>1$ and so the above theorem can be seen as a higher dimensional extension of Karpukhin's theorem \ref{Karpukhin's-Main-Theorem}.

\section{Main Results}
Let $(M,g)$ be a compact Riemannian manifold of dimension $n (\geq 2)$ and $d\mu_{g}$ its Riemannian measure. Set
\begin{equation*}
C^{\infty}_{0}(M,g) := \left\{ f \in C^{\infty}(M) \mid \int_{M} f d\mu_{g}=0  \right\}.
\end{equation*}
Fix a positive integer $k$. In this section, we always assume that $\lambda_{k-1}<\lambda_{k} = \lambda_{k+1} = \cdots =\lambda_{k+m-1} < \lambda_{k+m}$. Let $E_{k}=E_{k}(g)$ be the space of $\lambda_{k}$-eigenfunctions, that is, $E_{k} := \mbox{Ker}(\Delta-\lambda_{k} I)$. Let $\Pi_{k}$ be the $L^{2}$-orthogonal projection $L^{2}(M,g) \to E_{k}$. In this section, we always assume that $\varphi \in C^{\infty}_{0}(M,g)$ satisfies 
\begin{equation}
\label{condition}
  \int_{M} \varphi uv \: d\mu_{g} = 0 \quad \mbox{and}  \quad  \int_{M}u g( \nabla \varphi, \nabla v) \: d\mu_{g} = 0
\end{equation}
for any $u,v \in E_{k}$. Since the second equation in $(\ref{condition})$ can be rewritten as 
\begin{equation*}
\int_{M}\varphi \: \mbox{div}(u\nabla v) d\mu_{g} = 0,
\end{equation*}
and the both spaces $\{ u v \mid u, v \in E_{k}\}$ and $\{  \mbox{div}(u\nabla v) \mid u, v \in E_{k}\}$ are finite-dimensional, the existence of $\varphi \in C^{\infty}_{0}(M,g)$ satisfying $(\ref{condition})$ is guaranteed. In this paper, we always consider the deformation $(g_{t})_{t>0}$ given by $g_{t} := e^{\varphi t} g$. It is easy to see that $\Delta_{g_{t}}$ can be expressed as  
\begin{equation}
\label{Delta-gt}
\Delta_{g_{t}}f = e^{-\varphi t} \left\{ \Delta_{g}f -\frac{n-2}{2}t g(\nabla \varphi, \nabla f) \right\}, 
\end{equation}
where $f$ is an arbitrary smooth function on $M$. By the results of Bando and Urakawa \cite{BU}, there exists a real-analytic one-parameter family $\{ \Lambda_{j}(t), u_{j}(t) \}_{j=1}^{m}$ satisfying the following:
\begin{enumerate}
  \item $\{u_{j}(t)\}_{j=1}^{m}$ is orthonormal with respect to the $L^{2}(g_{t})$-inner product for any $t>0$.
  \item $\Lambda_{j}(0) = \lambda_{k}(g)$ for any $1\leq j \leq m$.
   \item $\Delta_{g_{t}} u_{j}(t) = \Lambda_{j}(t) u_{j}(t)$ for any $t>0$ and $1\leq j \leq m$.
\end{enumerate}
In particular, $\{u_{j}:= u_{j}(0)\}_{j=1}^{m}$ is an $L^{2}$-orthonormal basis of $E_{k}$.

In what follows, we use the above assumptions and notations to study the first and second variations of $\overline{\lambda_{k}}(g_{t}) := \lambda_{k}(g_{t}) \mbox{Vol}(M,g_{t})^{2/n}$. Our strategy is similar to that of Karpukhin's \cite{Karpukhin}, but our computation is much more complicated since the second term in $(\ref{Delta-gt})$ does not vanish in higher dimensions.
\begin{lemm}
\label{first-lemma}
\textit{
In the above setting, one has
\begin{equation*}
\left. \frac{d}{dt} \overline{\lambda_{k}}(g_{t}) \right|_{t=0} = 0.
\end{equation*}
}
\end{lemm}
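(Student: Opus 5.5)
Since $\int_M \varphi\, d\mu_g = 0$ and $d\mu_{g_t} = e^{n\varphi t/2} d\mu_g$, the volume is stationary:
\begin{equation*}
\left.\frac{d}{dt}\mbox{Vol}(M,g_t)\right|_{t=0} = \frac{n}{2}\int_M \varphi\, d\mu_g = 0 .
\end{equation*}
The whole statement therefore reduces to proving $\Lambda_j'(0)=0$ for every $1\leq j\leq m$. The point is that for small $t>0$ we have $\lambda_k(g_t)=\min_j \Lambda_j(t)$ by continuity of the spectrum, and the $\Lambda_j$ are real-analytic. So if all the $\Lambda_j'(0)$ vanish, then $\lambda_k(g_t)=\lambda_k(g)+O(t^2)$ and the derivative of $\overline{\lambda_k}(g_t)$ at $t=0$ is zero.

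To compute $\Lambda_j'(0)$, I rewrite the eigen-equation from (\ref{Delta-gt}) as
\begin{equation*}
\Delta_g u_j(t) - \frac{n-2}{2}\, t\, g(\nabla\varphi,\nabla u_j(t)) = \Lambda_j(t)\, e^{\varphi t} u_j(t),
\end{equation*}
which involves only $g$-quantities. I differentiate at $t=0$, writing $u_j'$ for $u_j'(0)$:
\begin{equation*}
\Delta_g u_j' - \frac{n-2}{2} g(\nabla\varphi,\nabla u_j) = \Lambda_j'(0) u_j + \lambda_k \varphi u_j + \lambda_k u_j' .
\end{equation*}
Next I take the $L^2(g)$ inner product with $u_i \in E_k$. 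Self-adjointness of $\Delta_g$ makes the terms involving $u_j'$ cancel, because $\int u_i \Delta_g u_j' = \lambda_k \int u_i u_j'$. What remains is
\begin{equation*}
\Lambda_j'(0)\,\delta_{ij} = -\lambda_k \int_M \varphi u_i u_j\, d\mu_g - \frac{n-2}{2}\int_M u_i\, g(\nabla\varphi,\nabla u_j)\, d\mu_g .
\end{equation*}
Here I used that $u_j(0)$ is orthonormal in $L^2(g)$. Both integrals on the right vanish by the assumption (\ref{condition}), so $\Lambda_j'(0)=0$ for all $j$, and the lemma follows from the first paragraph.

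The main thing to check is the justification of the reduction, namely that $\lambda_k(g_t)$ is among the analytic branches $\Lambda_j(t)$ near $t=0$, so that it is differentiable at $0$. This uses $\lambda_{k-1}<\lambda_k$ and $\lambda_{k+m-1}<\lambda_{k+m}$ together with continuity of the eigenvalues in $t$: for small $t$, exactly the $m$ eigenvalues $\Lambda_j(t)$ lie near $\lambda_k$. Since every branch has zero derivative, the minimum of the branches is differentiable at $0$ with derivative $0$, even if the branches cross. The remaining steps are routine differentiation.
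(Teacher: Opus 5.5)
Your proof is correct and follows the paper's argument. The volume is stationary; every analytic branch has $\dot{\Lambda}_{j}=0$ because $-\lambda_{k}\int_{M}\varphi u_{i}u_{j}\,d\mu_{g}-\frac{n-2}{2}\int_{M}u_{i}\,g(\nabla\varphi,\nabla u_{j})\,d\mu_{g}$ vanishes by $(\ref{condition})$; and $\lambda_{k}(g_{t})=\min_{j}\Lambda_{j}(t)$. The only difference is that you derive this first-variation identity yourself, by differentiating the eigen-equation and pairing with $E_{k}$, whereas the paper cites El Soufi--Ilias for the fact that each $\dot{\Lambda}_{j}$ is an eigenvalue of $P_{k,\varphi}$, which makes your version self-contained.
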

\begin{proof}
The assumption that $\varphi \in C^{\infty}_{0}(M,g)$ implies
\begin{equation}
\label{derivative-of-volume}
\left. \ \frac{d}{dt}\mbox{Vol}(M,g_{t}) \right|_{t=0} = \left. \ \frac{d}{dt} \left(\int_{M} e^{n\varphi t/2} \: d\mu_{g} \right) \right|_{t=0} =  \frac{n}{2} \int_{M}\varphi \: d\mu_{g} = 0.
\end{equation}

Next we prove $\dot{\lambda}_{k} := \left. \ \frac{d}{dt}\lambda_{k}(g_{t})\right|_{t=0}= 0$. Let $P_{k, \varphi}$ be a linear endomorphism of $E_{k}$ defined by $P_{k, \varphi} := \Pi_{k} \circ \left. \frac{d}{dt}\Delta_{g_{t}}\right|_{t=0}$. $P_{k,\varphi}$ is symmetric with respect to the $L^{2}$-inner product. El Soufi and Ilias \cite{EI} showed that for each $1 \leq j \leq m$, $\dot{\Lambda}_{j} := \left. \ \frac{d}{dt}\Lambda_{j}(t)\right|_{t=0}$ is equal to one of the eigenvalues of $P_{k,\varphi}$. The formula $(\ref{Delta-gt})$ implies that for any $u, v \in E_{k}$, one has
\begin{equation*}
\begin{split}
&\quad \langle u, P_{k, \varphi}(v) \rangle_{L^{2}(M,g)} \\
&=\int_{M} u \left(\left. \frac{d}{dt}\Delta_{g_{t}}v \right|_{t=0}  \right) d\mu_{g} \\
&= \int_{M} \left[(-\varphi u \Delta_{g}v)-\frac{n-2}{2}ug(\nabla \varphi, \nabla v) \right]d\mu_{g} \\
&= -\lambda_{k}\int_{M}\varphi u v \: d\mu_{g} -\frac{n-2}{2}\int_{M}ug(\nabla \varphi, \nabla v) \: d\mu_{g} \\
&=0,
\end{split}
\end{equation*}
where the assumption $(\ref{condition})$ is used at the last equality. Hence we have shown that $P_{k,\varphi}$ is the zero mapping and so 
\begin{equation}
\label{Lambda}
\dot{\Lambda}_{j}  = 0
\end{equation}
for any $1 \leq j \leq m$. Since we have $\lambda_{k}(g_{t}) = \min\{\Lambda_{j}(t)\}_{j=1}^{m}$, one obtains $\dot{\lambda}_{k} = 0$. Thus combining this with $(\ref{derivative-of-volume})$, one concludes the assertion.
\end{proof}

In the following lemma, we find $\ddot{\lambda}_{k}$.

\begin{lemm}
\label{lemma-T} 
\textit{
Define $S_{k, \varphi}: E_{k}(g) \to L^{2}(M,g)$ by
\begin{equation*}
\begin{split}
&\quad S_{k, \varphi}(u) \\
&:= \lambda_{k}\varphi^{2}u-\sum_{i: \: \lambda_{i} \neq \lambda_{k}}\frac{2\lambda_{i}\lambda_{k}}{\lambda_{i}-\lambda_{k}}\varphi\Pi_{i}(\varphi u) +(n-2)\varphi g(\nabla \varphi, \nabla u) \\
&-(n-2) \sum_{i: \: \lambda_{i} \neq \lambda_{k}}\frac{1}{\lambda_{i}-\lambda_{k}} \left\{ \lambda_{i} \varphi \Pi_{i}\left( g\left(\nabla \varphi, \nabla u)\right) +\lambda_{k}g\left(\nabla \varphi, \nabla \left(\Pi_{i} (\varphi u \right) \right) \right) \right\} \\
&-\frac{(n-2)^{2}}{2} \sum_{i: \: \lambda_{i} \neq \lambda_{k}}\frac{1}{\lambda_{i}-\lambda_{k}} g\left(\nabla \varphi, \nabla \left( \Pi_{i}g\left( \nabla \varphi, \nabla u \right) \right) \right). \\
\end{split}
\end{equation*}
Then $T_{k, \varphi} := \Pi_{k} \circ S_{k, \varphi}$ is a symmetric linear endomorphism of $E_{k}$ with eigenvalues $\{ \ddot{\Lambda}_{j} \}_{j=1}^{m}$ and one has 
\begin{equation*}
\ddot{\lambda}_{k} =\min\{ \ddot{\Lambda}_{j} \}_{j=1}^{m}.
\end{equation*}
In particular, $\ddot{\lambda}_{k}$ is equal to the smallest eigenvalue of $T_{k, \varphi}$.
}
\end{lemm}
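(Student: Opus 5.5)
We differentiate the eigenvalue equation $\Delta_{g_t}u_j(t)=\Lambda_j(t)u_j(t)$ twice at $t=0$, using the analytic family of Bando--Urakawa. Set $L_t:=e^{\varphi t}\Delta_{g_t}=\Delta_g-\frac{n-2}{2}t\,g(\nabla\varphi,\nabla\,\cdot\,)$, which is affine in $t$. The equation then becomes
\begin{equation*}
L_t u_j(t)=\Lambda_j(t)e^{\varphi t}u_j(t).
\end{equation*}
Write $D f:=g(\nabla\varphi,\nabla f)$ and $c:=\frac{n-2}{2}$.

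\emph{First order.} Differentiating once gives
\begin{equation*}
(\Delta-\lambda_k)\dot u_j = cDu_j+\lambda_k\varphi u_j+\dot\Lambda_j u_j,
\end{equation*}
where $\dot\Lambda_j=0$ by (\ref{Lambda}). Project onto $E_i$ for $\lambda_i\neq\lambda_k$; since $\Pi_i$ commutes with $\Delta$, we obtain
\begin{equation*}
\Pi_i\dot u_j=\frac{1}{\lambda_i-\lambda_k}\Pi_i\bigl(cDu_j+\lambda_k\varphi u_j\bigr).
\end{equation*}
The component $\Pi_k\dot u_j$ is not determined by this equation. We control it through the normalization $\int u_ju_l e^{n\varphi t/2}d\mu_g=\delta_{jl}$, whose derivative gives $\langle \dot u_j,u_l\rangle+\langle u_j,\dot u_l\rangle=0$, using $\int\varphi u_ju_l=0$. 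We expect, however, that $\Pi_k\dot u_j$ will drop out of the final formula, by (\ref{condition}).

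\emph{Second order.} Differentiating twice gives
\begin{equation*}
(\Delta-\lambda_k)\ddot u_j = 2cD\dot u_j+2\lambda_k\varphi\dot u_j+\lambda_k\varphi^2u_j+\ddot\Lambda_j u_j.
\end{equation*}
Pair with $u_l\in E_k$. The left side vanishes by self-adjointness, so
\begin{equation*}
\ddot\Lambda_j\delta_{jl}=-\bigl\langle u_l,\,2cD\dot u_j+2\lambda_k\varphi\dot u_j+\lambda_k\varphi^2u_j\bigr\rangle.
\end{equation*}
Next expand $\dot u_j=\Pi_k\dot u_j+\sum_{i:\lambda_i\ne\lambda_k}\Pi_i\dot u_j$. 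In the $E_k$ part, $\langle u_l,\varphi w\rangle=0$ for $w\in E_k$, and $\langle u_l,Dw\rangle=0$ for $w\in E_k$, both by (\ref{condition}); hence this part contributes nothing. For the remaining terms we move $D$ and multiplication by $\varphi$ onto $u_l$, using the adjoint identity
\begin{equation*}
\int u\,Dv=-\int v\,Du+\int uv\,\Delta\varphi,
\end{equation*}
which follows from $\mathrm{div}(uv\nabla\varphi)$. In practice we instead express terms as $\langle \Pi_i(\cdot),\cdot\rangle$ and rewrite them in the symmetric form given in $S_{k,\varphi}$.

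\emph{Main obstacle.} The key difficulty is matching the combination to $S_{k,\varphi}$ exactly. In particular, the $\frac{2\lambda_i\lambda_k}{\lambda_i-\lambda_k}$ coefficient should arise from
\begin{equation*}
\frac{\lambda_k}{\lambda_i-\lambda_k}+\frac{\lambda_k}{\lambda_i-\lambda_k}+(\text{a term from }\lambda_k\varphi^2),
\end{equation*}
once $\langle \varphi u_l,\Pi_i(\varphi u_j)\rangle$ terms are combined with the identity $\sum_i\Pi_i=I$. I would use $\lambda_i=\lambda_k+(\lambda_i-\lambda_k)$ to trade coefficients, turning the completeness sum into local terms such as $\varphi^2u$ and $\varphi Du$. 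The sign conventions (the $\Delta$ here is nonnegative) and the overall factor must also be tracked. After this, $\ddot\Lambda_j\delta_{jl}=\langle u_l,T_{k,\varphi}u_j\rangle$ up to the stated normalization. Since the $u_j$ diagonalize this bilinear form, $T_{k,\varphi}$ has eigenvalues $\ddot\Lambda_j$. Symmetry of $T_{k,\varphi}$ follows, or can be checked directly from the symmetric form of the sums.

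\emph{Conclusion.} Finally, $\lambda_k(g_t)=\min_j\Lambda_j(t)$ for small $t$. All $\Lambda_j$ agree to first order ($\Lambda_j(0)=\lambda_k$ and $\dot\Lambda_j=0$), so Taylor expansion gives $\Lambda_j(t)=\lambda_k+\tfrac{t^2}{2}\ddot\Lambda_j+O(t^3)$. Therefore the second derivative of the minimum is $\min_j\ddot\Lambda_j$, which is the smallest eigenvalue of $T_{k,\varphi}$.
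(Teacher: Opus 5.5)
Your proposal is correct and is essentially the paper's argument. You differentiate the eigenvalue equation twice at $t=0$, solve for $\Pi_i\dot u_j$ with $\lambda_i\neq\lambda_k$, project the second-order equation onto $E_k$, and discard all $\Pi_k\dot u_j$ contributions by (\ref{condition}). You then read off $T_{k,\varphi}u_j=\ddot\Lambda_j u_j$ in the orthonormal basis $\{u_j\}$, which is also the right way to get symmetry; a ``direct'' check from the formula is not immediate, since $f\mapsto g(\nabla\varphi,\nabla f)$ is not self-adjoint.

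The only difference is cosmetic. Multiplying through by $e^{\varphi t}$ gives you, with your $D$, the kernel $-\lambda_k\varphi^2u-\sum_i\frac{2\lambda_k^2}{\lambda_i-\lambda_k}\varphi\Pi_i(\varphi u)-(n-2)\sum_i\frac{\lambda_k}{\lambda_i-\lambda_k}\{\varphi\Pi_i(Du)+D\Pi_i(\varphi u)\}-\frac{(n-2)^2}{2}\sum_i\frac{1}{\lambda_i-\lambda_k}D\Pi_i(Du)$. There is no extra normalization factor, so your hedges on normalization and overall factor are unnecessary. This kernel equals $S_{k,\varphi}(u)$ exactly for $u\in E_k$, via $\frac{\lambda_k}{\lambda_i-\lambda_k}=\frac{\lambda_i}{\lambda_i-\lambda_k}-1$, $\sum_{i:\lambda_i\neq\lambda_k}\Pi_i=I-\Pi_k$ and (\ref{condition}); that is precisely the conversion you anticipated. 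The paper instead lands on $S_{k,\varphi}$ directly by substituting the spectral expansion of $\Delta\dot u_j$ into the second-order equation.
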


\begin{proof} 
Combining the formula $\Delta_{g_{t}}u_{j}(t) = \Lambda_{j}(t) u_{j}(t)$ with $(\ref{Delta-gt})$, one obtains
\begin{equation}
\label{beginning-eq}
e^{-\varphi t}\left\{\Delta_{g}u_{j}(t) -\frac{n-2}{2}tg(\nabla \varphi, \nabla u_{j}(t)) \right\} = \Lambda_{j}(t) u_{j}(t).
\end{equation}
Differentiating this by $t$, one obtains
\begin{equation*}
\begin{split}
&e^{-\varphi t} \left[ -\varphi  \Delta u_{j}(t) + \Delta u_{j}'(t) - \frac{n-2}{2} \left\{ (1-\varphi t)g\left( \nabla \varphi, \nabla u_{j}(t) \right) + t g\left( \nabla \varphi, \nabla u_{j}'(t) \right) \right\} \right] \\
&= \Lambda_{j}'(t)u_{j}(t) + \Lambda_{j}(t)u_{j}'(t). \\
\end{split}
\end{equation*}
Hence this equation and $(\ref{Lambda})$ imply that the derivative of $(\ref{beginning-eq})$ at $t=0$ is 
\begin{equation*}
-\lambda_{k}\varphi u_{j} +\Delta \dot{u}_{j} -\frac{n-2}{2} g\left( \nabla \varphi, \nabla u_{j} \right)  = \lambda_{k}\dot{u}_{j}.
\end{equation*}
Applying $\Pi_{i}$ to this equation, one obtains
\begin{equation*}
-\lambda_{k}\Pi_{i}(\varphi u_{j}) +\lambda_{i}\Pi_{i}( \dot{u}_{j}) -\frac{n-2}{2} \Pi_{i} \left( (g\left( \nabla \varphi, \nabla u_{j} \right)  \right) = \lambda_{k}\Pi_{i}(\dot{u}_{j}).
\end{equation*}
Hence one obtains
\begin{equation}
\label{dot-u}
\begin{split}
\dot{u}_{j} &= \Pi_{k}(\dot{u}_{j}) + \sum_{i: \: \lambda_{i} \neq \lambda_{k} } \Pi_{i}(\dot{u}_{j}) \\
&= \Pi_{k}(\dot{u}_{j}) + \sum_{i: \: \lambda_{i} \neq \lambda_{k} } \frac{1}{\lambda_{i}-\lambda_{k}}\left\{ \lambda_{k}\Pi_{i}(\varphi u_{j}) + \frac{n-2}{2}\Pi_{i}\left( g(\nabla \varphi, \nabla u_{j}) \right) \right\}. \\
\end{split}
\end{equation}
Hence one obtains
\begin{equation}
\label{Delta-dot}
\Delta \dot{u}_{j} = \lambda_{k} \Pi_{k}(\dot{u}_{j})+  \sum_{i: \: \lambda_{i} \neq \lambda_{k} } \frac{\lambda_{i}}{\lambda_{i}-\lambda_{k}}\left\{ \lambda_{k}\Pi_{i}(\varphi u_{j}) + \frac{n-2}{2}\Pi_{i}\left( g(\nabla \varphi, \nabla u_{j}) \right) \right\}.
\end{equation}
On the other hand, the second derivative of $(\ref{beginning-eq})$ at $t=0$ is 
\begin{equation*}
\lambda_{k}\varphi^{2}u_{j}- 2\varphi\Delta \dot{u}_{j} + \Delta \ddot{u}_{j} + (n-2)\left\{ \varphi g(\nabla \varphi, \nabla u_{j} ) -g(\nabla \varphi, \nabla \dot{u}_{j} ) \right\} = \ddot{\Lambda}_{j}u_{j} + \lambda_{k}\ddot{u}_{j},
\end{equation*}
where we have used $(\ref{Lambda})$. Substituting $(\ref{Delta-dot})$ into this equation, one obtains
\begin{equation*}
\begin{split}
&\lambda_{k}\varphi^{2}u_{j}- 2\lambda_{k}\varphi\Pi_{k}(\dot{u}_{j}) - \sum_{i: \lambda_{i} \neq \lambda_{k}}\frac{2\lambda_{i}}{\lambda_{i}-\lambda_{k}}\left\{\lambda_{k}\varphi \Pi_{i}(\varphi u_{j}) + \frac{n-2}{2}\varphi \Pi_{i}(g(\nabla \varphi, \nabla u_{j})) \right\} \\
& + \Delta \ddot{u}_{j} + (n-2)\left\{ \varphi g(\nabla \varphi, \nabla u_{j} ) -g(\nabla \varphi, \nabla \dot{u}_{j} ) \right\}  \\
&= \ddot{\Lambda}_{j}u_{j} + \lambda_{k}\ddot{u}_{j}. \\
\end{split}
\end{equation*}
Applying $\Pi_{k}$ to this equation and using the fact that $\Pi_{k}(\varphi \Pi_{k}(\dot{u}_{j}) )= 0$, which immediately follows from $(\ref{condition})$, one obtains
\begin{equation*}
\begin{split}
&\lambda_{k}\Pi_{k}(\varphi^{2}u_{j})  - \sum_{i: \lambda_{i} \neq \lambda_{k}}\frac{2\lambda_{i}}{\lambda_{i}-\lambda_{k}}\left\{\lambda_{k}\Pi_{k}\left( \varphi \Pi_{i}(\varphi u_{j}) \right) + \frac{n-2}{2}\Pi_{k}\left( \varphi  \Pi_{i}(g(\nabla \varphi, \nabla u_{j})) \right) \right\} \\
&+ (n-2) \Pi_{k} \left(  \varphi g(\nabla \varphi, \nabla u_{j} )   -g(\nabla \varphi, \nabla \dot{u}_{j} ) \right) \\
&= \ddot{\Lambda}_{j}u_{j} \\
\end{split}
\end{equation*}
Substituting $(\ref{dot-u})$ into this equation and using the fact that $\Pi_{k}\left(g\left(\nabla \varphi, \nabla(\Pi_{k}(\dot{u}))\right) \right)=0$, which follows from $(\ref{condition})$, one can obtain $T_{k, \varphi}(u_{j}) =  \ddot{\Lambda}_{j}u_{j}$. Since $\{u_{j}\}_{j=1}^{m}$ is an $L^{2}$-orthonormal basis of $E_{k}$, $T_{k, \varphi}$ is a symmetric linear endomorphism of $E_{k}$ with eigenvalues $\{ \ddot{\Lambda}_{j} \}_{j=1}^{m}$. On the other hand, by $(\ref{Lambda})$, one has the Taylor series 
\begin{equation*}
\Lambda_{j}(t) = \lambda_{k} + \frac{\ddot{\Lambda}_{j}}{2}t^{2} + O(t^{3}) \quad (t \to 0).
\end{equation*}
Since one has $\lambda_{k}(g_{t}) = \min\{ \Lambda_{j} (t) \}_{j=1}^{m}$, one obtains
\begin{equation*}
\ddot{\lambda}_{k} =\min\{ \ddot{\Lambda}_{j} \}_{j=1}^{m}.
\end{equation*}
\end{proof}
\begin{theo}
\label{Main-Theorem}
\textit{
Let $\mu_{k, \varphi}$ be the smallest eigenvalue of $T_{k, \varphi}$. Then  one has
\begin{equation*}
\overline{\lambda_{k}}(g_{t}) = \lambda_{k} + \frac{\alpha_{k, \varphi}}{2}t^{2} + o(t^{2}) \quad (t \to 0),
\end{equation*}
where $\alpha_{k, \varphi}=  \left( \mu_{k, \varphi}  \mathrm{Vol}(M,g) + \frac{n\lambda_{k}}{2}\| \varphi \|^{2}_{L^{2}(M, g)} \right) \left(\mathrm{Vol}(M,g) \right)^{(2-n)/n}$.
}
\end{theo}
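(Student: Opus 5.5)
We will prove the expansion by writing $\overline{\lambda_{k}}(g_{t}) = \lambda_{k}(g_{t})\, V(t)^{2/n}$ with $V(t) := \mathrm{Vol}(M,g_{t}) = \int_{M} e^{n\varphi t/2}\, d\mu_{g}$, and expanding each factor to second order in $t$ separately.

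\textbf{The volume factor.} We expand the integrand directly:
\begin{equation*}
V(t) = \mathrm{Vol}(M,g) + \frac{n}{2}t\int_{M}\varphi\, d\mu_{g} + \frac{n^{2}}{8}t^{2}\|\varphi\|^{2}_{L^{2}(M,g)} + O(t^{3}).
\end{equation*}
The linear term vanishes because $\varphi \in C^{\infty}_{0}(M,g)$. Writing $V_{0} := \mathrm{Vol}(M,g)$ and raising to the power $2/n$, we get
\begin{equation*}
V(t)^{2/n} = V_{0}^{2/n}\left(1 + \frac{n}{4}\frac{\|\varphi\|^{2}_{L^{2}}}{V_{0}}t^{2} + O(t^{3})\right).
\end{equation*}

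\textbf{The eigenvalue factor.} Here we use Lemma \ref{lemma-T} together with the proof of Lemma \ref{first-lemma}. Each analytic branch satisfies $\Lambda_{j}(t) = \lambda_{k} + \frac{\ddot{\Lambda}_{j}}{2}t^{2} + O(t^{3})$, and for small $t$ we have $\lambda_{k}(g_{t}) = \min_{j}\Lambda_{j}(t)$. There is a subtlety in taking this minimum of finitely many expansions. Two branches with equal $\ddot{\Lambda}_{j}$ may be ordered differently for $t>0$ and $t<0$. However, their difference is $O(t^{3})$, so in all cases
\begin{equation*}
\min_{j}\Lambda_{j}(t) = \lambda_{k} + \frac{\mu_{k,\varphi}}{2}t^{2} + o(t^{2}).
\end{equation*}
Here $\mu_{k,\varphi} = \min_{j}\ddot{\Lambda}_{j}$, which by Lemma \ref{lemma-T} is the smallest eigenvalue of $T_{k,\varphi}$.

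We must also justify that $\lambda_{k}(g_{t})$ is given by these $m$ branches for small $t$, i.e.\ that the other eigenvalues stay away. This follows from continuity of the spectrum (Bando--Urakawa) and the gap assumption $\lambda_{k-1}<\lambda_{k}=\cdots=\lambda_{k+m-1}<\lambda_{k+m}$.

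\textbf{Combining.} Multiplying the two expansions gives
\begin{equation*}
\overline{\lambda_{k}}(g_{t}) = \lambda_{k}V_{0}^{2/n} + \frac{t^{2}}{2}V_{0}^{2/n}\left(\mu_{k,\varphi} + \frac{n\lambda_{k}}{2}\frac{\|\varphi\|^{2}_{L^{2}}}{V_{0}}\right) + o(t^{2}).
\end{equation*}
Factoring out $V_{0}^{(2-n)/n}$ turns the coefficient into exactly $\alpha_{k,\varphi}$.

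The statement writes the leading term as $\lambda_{k}$. This matches $\lambda_{k}V_{0}^{2/n}$ either under the normalization $\mathrm{Vol}(M,g)=1$ or by reading the leading term as $\overline{\lambda_{k}}(g)$. I will note that interpretation.

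The only genuinely delicate step is the passage from the branch-wise Taylor expansions to the expansion of the minimum, which requires the uniform $o(t^{2})$ remainder and the isolation of the eigenvalue cluster. Everything else is routine Taylor algebra.
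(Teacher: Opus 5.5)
Your proposal is correct and follows essentially the paper's route: the paper likewise uses $\int_M\varphi\,d\mu_g=0$ and $\frac{d^2}{dt^2}\mathrm{Vol}(M,g_t)|_{t=0}=\frac{n^2}{4}\|\varphi\|^2_{L^2(M,g)}$ to obtain the second derivative of $\mathrm{Vol}(M,g_t)^{2/n}$, and then combines it via the product rule with $\dot{\lambda}_k=0$ and $\ddot{\lambda}_k=\mu_{k,\varphi}$ from Lemmas \ref{first-lemma} and \ref{lemma-T}; this is the same computation as your multiplication of Taylor expansions. Your extra care is also correct: you justify that the minimum over the branches has expansion $\lambda_k+\frac{\mu_{k,\varphi}}{2}t^2+o(t^2)$, and that the cluster stays isolated. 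Your remark that the constant term should be $\overline{\lambda_k}(M,g)=\lambda_k\mathrm{Vol}(M,g)^{2/n}$ rather than $\lambda_k$ is right as well; these are all refinements of points the paper passes over.
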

\begin{proof}
We have
\begin{equation*}
\mbox{Vol}(M, g_{t}) = \int_{M}e^{n\varphi  t/2} d\mu_{g}
\end{equation*}
and so 
\begin{equation*}
\left. \ \frac{d^{2}}{dt^{2}}\mbox{Vol}(M,g_{t}) \right|_{t=0} = \frac{n^{2}}{4} \int_{M} \varphi^{2} d \mu_{g} = \frac{n^{2}}{4}\| \varphi \|^{2}_{L^{2}(M, g)}.
\end{equation*}
Combining this fact with $(\ref{derivative-of-volume})$, we obtain
\begin{equation*}
\left. \ \frac{d^{2}}{dt^{2}}\left(\mbox{Vol}(M,g_{t}) \right)^{2/n} \right|_{t=0} = \frac{n}{2} \left(\mbox{Vol}(M,g) \right)^{(2-n)/n}\| \varphi \|^{2}_{L^{2}(M, g)}.
\end{equation*}
Hence this fact and Lemma \ref{first-lemma} and \ref{lemma-T} imply
\begin{equation*}
\left. \frac{d^{2}}{dt^{2}} \overline{\lambda_{k}}(g_{t}) \right|_{t=0} = \mu_{k, \varphi} \left( \mbox{Vol}(M,g) \right)^{2/n} + \frac{n\lambda_{k}}{2} \left(\mbox{Vol}(M,g) \right)^{(2-n)/n}\| \varphi \|^{2}_{L^{2}(M, g)}
\end{equation*}
\end{proof}

\begin{rema} We explain the relationships between Karpukhin's result \cite{Karpukhin} and ours. Karpukhin studied the second variation formula for only closed surfaces. In order to extend his result to higher dimensional manifolds, we have added an extra assumption
\begin{equation*}
\int_{M} ug (\nabla \varphi, \nabla v) d\mu_{g} = 0 \quad \mbox{for any} \quad u, v \in E_{k}.
\end{equation*}
For closed surfaces $(n=2)$, the above $T_{k, \varphi}$ is symmetric with respect to the $L^{2}(g)$-inner product, but for higher dimensional manifolds, $T_{k, \varphi}$ is only diagonalizable (Lemma \ref{lemma-T}).  For closed surfaces, Karpukhin defined the quadratic form $Q_{k, \varphi}$ on $E_{k}$ by
\begin{equation*}
Q_{k, \varphi} = \frac{1}{\lambda_{k}} \langle u, T_{k, \varphi} u \rangle_{L^{2}(g)}
\end{equation*}
and expressed the Taylor series of $\overline{\lambda_{k}}(g_{t})$ using the smallest eigenvalue of $Q_{k, \varphi}$. Hence there appears to be a difference between Karpukhin's result and ours by $1/\lambda_{k}$, but they are essentially the same for $n=2$.
\end{rema}

Next we apply Theorem \ref{Main-Theorem} to certain flat tori. Let $\Gamma$ be a lattice in $\mathbf{R}^{n}$. Then $\Gamma$ can be expressed as $\Gamma = A \mathbf{Z}^{n}$ for some invertible matrix $A$ and the dual lattice $\Gamma^{*}$, which is defined by
\begin{equation*}
\Gamma^{*} = \{ w \in \mathbf{R}^{n} \mid w\cdot x \in \mathbf{Z} \quad \mbox{for any} \quad  x \in \Gamma \},
\end{equation*}
can be expressed as $\Gamma^{*} = (A^{T})^{-1} \mathbf{Z}^{n}$. Set $T^{n}_{\Gamma} := \mathbf{R}^{n}/\Gamma$ and let $g_{\Gamma}$ be the canonical flat metric on it. Then one has $\mbox{Vol}(T^{n}_{\Gamma}, g_{\Gamma}) = |\mbox{det}A|$. By the result of El Soufi and Ilias \cite{EI-conformal}, $g_{\Gamma}$ is a critical point of $\overline{\lambda_{1}}\restriction_{[g_{\Gamma}]}$.
\begin{theo}
\label{Main-Theorem-Tori}
\textit{
In the above setting, assume that there exists $w \in \Gamma^{*} \setminus \{ 0 \}$ such that the inequality $|w|<|v|$ holds for any $v \in \Gamma^{*} \setminus \{0, \pm w \}$. Then the canonical flat metric $g_{\Gamma}$ is not a maximal point of $\overline{\lambda_{1}}\restriction_{[g_{\Gamma}]}$.
}
\end{theo}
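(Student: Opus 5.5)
The strategy is to apply Theorem \ref{Main-Theorem} with $k=1$ and an explicit trigonometric $\varphi$, and to show that $\alpha_{1,\varphi}>0$. Then $\overline{\lambda_{1}}(g_{t})>\overline{\lambda_{1}}(g_{\Gamma})$ for all small $t\neq 0$, so $g_{\Gamma}$ is not a maximal point of $\overline{\lambda_{1}}\restriction_{[g_{\Gamma}]}$.

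\emph{Setting up.} The eigenvalues of $\Delta_{g_{\Gamma}}$ are $4\pi^{2}|v|^{2}$, $v\in\Gamma^{*}$, with eigenfunctions $e^{2\pi i v\cdot x}$. By the hypothesis on $w$, $\lambda_{1}=4\pi^{2}|w|^{2}$ has multiplicity exactly two. Its eigenspace is
\begin{equation*}
E_{1}=\mathrm{span}\{\cos\theta,\sin\theta\},\qquad \theta=2\pi w\cdot x .
\end{equation*}
The products $uv$ with $u,v\in E_{1}$, and the functions $\mathrm{div}(u\nabla v)$, all lie in $\mathrm{span}\{1,\cos 2\theta,\sin 2\theta\}$. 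Hence condition $(\ref{condition})$ holds for any $\varphi$ that is a finite trigonometric sum avoiding the frequencies $0$ and $\pm 2w$. I would take $\varphi=\cos(2\pi v\cdot x)$ with a single frequency $v\in\Gamma^{*}\setminus\{0,\pm 2w\}$, possibly replaced later by a combination of a few such modes.

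\emph{Computing $T_{1,\varphi}$.} For such $\varphi$ every series in $S_{1,\varphi}$ is a finite sum. The functions $\varphi u$ and $g(\nabla\varphi,\nabla u)$ contain only the modes $v\pm w$, so only the eigenvalues $4\pi^{2}|v\pm w|^{2}$ appear, and each $\Pi_{i}$ acts by picking out exponentials. After applying $\Pi_{1}$, only the frequency-$\pm w$ parts survive. Translation invariance, $x\mapsto x+s$, together with the reflection $x\mapsto -x$, should force $T_{1,\varphi}$ to be a scalar multiple of the identity on $E_{1}$, or at least diagonal in the basis $\cos\theta,\sin\theta$. Its smallest eigenvalue $\mu_{1,\varphi}$ is then an explicit rational function of $|w|^{2}$, $|v|^{2}$ and $w\cdot v$. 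With $\|\varphi\|^{2}_{L^{2}}=\mathrm{Vol}/2$, the sign of $\alpha_{1,\varphi}$ equals the sign of $\mu_{1,\varphi}+\tfrac{n}{4}\lambda_{1}$.

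\emph{The main obstacle is choosing $v$ so that this quantity is positive.} A quick computation with $v\perp w$ is discouraging. There the gradient terms $g(\nabla\varphi,\nabla u)$ vanish, and one finds
\begin{equation*}
\mu_{1,\varphi}+\tfrac{n}{4}\lambda_{1}=\lambda_{1}\Bigl(\tfrac12-\tfrac n4-\tfrac{\lambda_{1}}{4\pi^{2}|v|^{2}}\Bigr)<0 .
\end{equation*}
So orthogonal directions decrease $\overline{\lambda_{1}}$, and the positive contribution must come from the mixed terms proportional to $w\cdot v$.

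I would therefore first try $v=mw$ with $m\geq 3$ an integer, which lies in $\Gamma^{*}$. This makes $\varphi$ a function of $\theta$ alone, as in Karpukhin's one-dimensional mechanism. The nearby levels $4\pi^{2}(m\pm1)^{2}|w|^{2}$ then give large coefficients $\lambda_{i}/(\lambda_{i}-\lambda_{1})$ multiplying terms of size $(w\cdot v)^{2}$. I would evaluate $\mu_{1,\varphi}$ as a function of $m$ and $n$ and check positivity, most likely for $m=3$. If a single mode does not suffice for all $n$, I would next try a two-mode $\varphi=\cos(2\pi m w\cdot x)+c\cos(2\pi m' w\cdot x)$, or $v=mw+v'$ with $v'$ small, and optimize over the parameter.
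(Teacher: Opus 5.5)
There is a genuine gap. Your reduction to showing $\alpha_{1,\varphi}>0$ for one admissible $\varphi$ is correct, as are your admissibility criterion (no frequencies $0,\pm 2w$) and your computation for $v\perp w$. But the proposal stops at the decisive step, and the candidate you single out fails.

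For $\varphi=\cos(m\theta)$ with $m\ge 3$, everything is a function of $\theta$, with $\Delta=-\lambda_1\partial_\theta^2$ and $g(\nabla f,\nabla h)=\lambda_1 f'h'$. Only the levels $\lambda_1(m\pm1)^2$ enter, and the formula for $S_{1,\varphi}$ gives $T_{1,\varphi}=\mu\,\mathrm{Id}$ with
\[
\mu+\frac{n}{4}\lambda_1=-\frac{(n-1)\bigl(4+(n-2)m^2\bigr)}{4(m^2-4)}\,\lambda_1<0 .
\]
So $\alpha_{1,\varphi}<0$ for all $m\ge3$ and $n\ge2$: along these curves $g_\Gamma$ is a strict local maximum.

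Your fallbacks fail for a structural reason. Rewriting the second variation through the Rayleigh quotient $\int e^{(n-2)\varphi t/2}|\nabla u|^2\,d\mu_g/\int e^{n\varphi t/2}u^2\,d\mu_g$, one finds, under (\ref{condition}) and for $u\in E_1$,
\[
\langle T_{1,\varphi}u,u\rangle=\frac{(n-2)^2}{4}\int_M\varphi^2|\nabla u|^2\,d\mu_g-\frac{n^2\lambda_1}{4}\int_M\varphi^2u^2\,d\mu_g-2\sum_{i:\,\lambda_i\neq\lambda_1}\frac{\|\Pi_iF_u\|^2_{L^2}}{\lambda_i-\lambda_1},
\]
with $F_u=\lambda_1\varphi u+\frac{n-2}{2}g(\nabla\varphi,\nabla u)$. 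Every level above $\lambda_1$ therefore enters with a minus sign, contrary to your heuristic about the large coefficients $\lambda_i/(\lambda_i-\lambda_1)$. The only level below $\lambda_1$ is $\lambda_0=0$, and $\Pi_0F_u=\frac{n\lambda_1}{2\mathrm{Vol}}\int_M\varphi u\,d\mu_g$ vanishes when $\varphi\perp E_1$. Averaging over $u_1,u_2$ (for which $u_1^2+u_2^2$ and $|\nabla u_1|^2+|\nabla u_2|^2$ are constant) then gives $\mu_{1,\varphi}\mathrm{Vol}<-(n-1)\lambda_1\|\varphi\|^2_{L^2}$, hence $\alpha_{1,\varphi}<0$. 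This holds for every admissible $\varphi\perp E_1$: single modes $v\neq\pm w$, your two-mode combinations, and $mw+v'$ alike.

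The missing idea is to take $\varphi$ inside $E_1$, namely $\varphi=u_1=\sqrt2\,|\det A|^{-1/2}\sin(2\pi w\cdot x)$. This is exactly the case $m=1$ that you discarded. It is admissible, since $\varphi uv$ has only frequencies $\pm w,\pm3w$ and $\varphi\perp 1,\cos2\theta,\sin2\theta$. Now $\varphi u_1=|\det A|^{-1}(1-\cos 2\theta)$ has a constant part.

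This is the paper's proof. Using the triple-angle and product-to-sum identities, $T_{1,\varphi}$ is diagonal in $u_1,u_2$ with eigenvalues $\frac{\lambda_1}{6|\det A|}(n^2+n-5)$ and $\frac{\lambda_1}{6|\det A|}(n^2-5n+1)$. Hence $\mu_{1,\varphi}=\frac{\lambda_1}{6|\det A|}(n^2-5n+1)$. Since $\|\varphi\|_{L^2}=1$, Theorem \ref{Main-Theorem} gives $\alpha_{1,\varphi}=\frac{(n-1)^2\lambda_1}{6}|\det A|^{(2-n)/n}>0$.
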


\begin{proof}
By the assumption, one has $\lambda_{1}(g_{\Gamma}) = 4\pi^{2}|w|^{2}$ with multiplicity two. More explicitly, 
\begin{equation*}
u_{1}(x) := \frac{\sqrt{2}}{|\mbox{det}A|^{1/2} } \sin (2\pi w \cdot x), \quad u_{2}(x) := \frac{\sqrt{2}}{|\mbox{det}A|^{1/2} } \cos (2\pi w \cdot x)
\end{equation*}
form an $L^{2}$-orthonormal basis of $E_{1}$. Set $\varphi= u_{1}$ and consider the conformal deformation $g_{t} := e^{\varphi t} g_{\Gamma}$. In order to use Theorem $\ref{Main-Theorem}$, we first verify that the assumption $(\ref{condition})$ is satisfied. Clearly, one has $ \varphi \in C^{\infty}_{0}(T^{n}_{\Gamma}, g_{\Gamma})$. Moreover, one has 
\begin{equation*}
\varphi u_{1} (x) = u_{1}^{2}(x)= \frac{2}{|\mbox{det}A| }\sin^{2} (2\pi w \cdot x) = \frac{1}{|\mbox{det}A| } \left( 1-\cos(4\pi w \cdot x) \right)
\end{equation*}
and 
\begin{equation*}
\varphi u_{2} (x) =u_{1}u_{2}(x)= \frac{2}{|\mbox{det}A| }\cos (2\pi w \cdot x) \sin (2\pi w \cdot x) = \frac{1}{|\mbox{det}A| } \sin(4\pi w \cdot x).
\end{equation*}
Both $\cos (4\pi w \cdot x)$ and $\sin(4\pi w \cdot x)$ are $4\lambda_{1}$-eigenfunctions. Hence $\varphi u_{1}$ and $\varphi u_{2}$ are orthogonal to $E_{1}$. Furthermore, one has
\begin{equation*}
g_{\Gamma} (\nabla \varphi, \nabla u_{1}) = \frac{2\lambda_{1}}{|\mbox{det}A|}\cos^{2}(2\pi w \cdot x) = \frac{\lambda_{1}}{|\mbox{det}A|}\left(1+\cos (4\pi w \cdot x) \right)
\end{equation*}
and 
\begin{equation*}
g_{\Gamma} (\nabla \varphi, \nabla u_{2}) = -\frac{2\lambda_{1}}{|\mbox{det}A|}\cos(2\pi w \cdot x) \sin (2\pi w \cdot x) = -\frac{\lambda_{1}}{|\mbox{det}A|}\sin (4\pi w \cdot x) .
\end{equation*}
Hence $g_{\Gamma} (\nabla \varphi, \nabla u_{1})$ and $g_{\Gamma} (\nabla \varphi, \nabla u_{2})$ are orthogonal to $E_{1}$ and thus $(\ref{condition})$ is satisfied. 

Next we compute the eigenvalues of $T_{1, \varphi}$, which is defined in Lemma \ref{lemma-T}. By the triple-angle identity, we have
\begin{equation*}
\begin{split}
u_{1}^{3} (x)&= \frac{2\sqrt{2}}{|\mbox{det}A|^{3/2}}\sin^{3}(2\pi w \cdot x)\\
&=  \frac{3\sqrt{2}}{2|\mbox{det}A|^{3/2}}\sin(2\pi w \cdot x) - \frac{\sqrt{2}}{2|\mbox{det}A|^{3/2}}\sin(6\pi w \cdot x) \\
&=  \frac{3}{2|\mbox{det}A|}u_{1}(x) - \frac{\sqrt{2}}{2|\mbox{det}A|^{3/2}}\sin(6\pi w \cdot x) \\
\end{split}
\end{equation*}
Furthermore, by the product-to-sum formulas, we obtain
\begin{equation*}
\begin{split}
u_{1}(x) \cos(4\pi w\cdot x) &=  \frac{\sqrt{2}}{|\mbox{det}A|^{1/2}}\sin(2\pi w \cdot x) \cos(4\pi w \cdot x) \\
&=  \frac{\sqrt{2}}{2|\mbox{det}A|^{1/2}}\left\{ \sin(6\pi w \cdot x)-\sin (2\pi w \cdot x) \right\}  \\
&= -\frac{1}{2}u_{1}(x) +  \frac{\sqrt{2}}{2|\mbox{det}A|^{1/2}}\sin(6\pi w \cdot x) \\
\end{split}
\end{equation*}
and 
\begin{equation*}
\begin{split}
g_{\Gamma}\left(\nabla u_{1}(x), \nabla(\cos(4\pi w \cdot x) \right) &= -\frac{8\sqrt{2}|w|^{2} \pi^{2} }{|\mbox{det}A|^{1/2}} \sin(4\pi w \cdot x) \cos(2\pi w \cdot x)\\
&= -\frac{\sqrt{2}\lambda_{1} }{|\mbox{det}A|^{1/2}} \left\{ \sin(6\pi w \cdot x)+ \sin(2\pi w \cdot x) \right\} \\
&= -\lambda_{1}u_{1}(x) -\frac{\sqrt{2}\lambda_{1} }{|\mbox{det}A|^{1/2}}  \sin(6\pi w \cdot x). \\
\end{split}
\end{equation*}
Together with these formulas, a straightforward computation yields
\begin{equation*}
\begin{split}
S_{1, \varphi}(u_{1}) (x)  &= \frac{\lambda_{1}}{6|\mbox{det}A|}\left\{1+5(n-2)+(n-2)^{2} \right\}u_{1}(x) \\
&\quad + \frac{\sqrt{2}\lambda_{1}}{6|\mbox{det}A|^{3/2}}\left\{5-3(n-2)+(n-2)^{2} \right\}\sin(6\pi w \cdot x).\\
\end{split}
\end{equation*}
Since the function $\sin(6\pi w \cdot x)$ is a $9\lambda_{1}$-eigenfunction, we obtain
\begin{equation*}
T_{1, \varphi}(u_{1}) = \frac{\lambda_{1}}{6|\mbox{det}A|}(n^{2}+n-5 )u_{1}.
\end{equation*}
In a similar manner, one can obtain
\begin{equation*}
u_{1}^{2}u_{2}(x) = \frac{1}{2 |\mbox{det}A|}u_{2}(x)- \frac{\sqrt{2}}{2 |\mbox{det}A|^{3/2}}\cos(6\pi w \cdot x),
\end{equation*}
\begin{equation*}
u_{1}(x)\sin(4\pi w \cdot x) = \frac{1}{2}u_{2}(x) -\frac{\sqrt{2}}{2 |\mbox{det}A|^{1/2}}\cos(6\pi w \cdot x),
\end{equation*}
and
\begin{equation*}
g_{\Gamma}\left(\nabla u_{1}(x), \nabla(\sin(4\pi w \cdot x) \right) = \lambda_{1}u_{2}(x) +\frac{\sqrt{2}}{2 |\mbox{det}A|^{1/2}}\cos(6\pi w \cdot x).
\end{equation*}
These equations imply
\begin{equation*}
\begin{split}
S_{1, \varphi}(u_{2})(x)  &= \frac{\lambda_{1}}{6|\mbox{det}A|}\left\{-5-(n-2)+(n-2)^{2} \right\}u_{2}(x) \\
&\quad + \frac{\sqrt{2}\lambda_{1}}{6|\mbox{det}A|^{3/2}}\left\{5-3(n-2)+(n-2)^{2} \right\}\cos(6\pi w \cdot x).\\
\end{split}
\end{equation*}
and hence
\begin{equation*}
T_{1, \varphi}(u_{2}) = \frac{\lambda_{1}}{6|\mbox{det}A|}(n^{2}-5n+1 )u_{2}.
\end{equation*}
Hence one obtains
\begin{equation*}
\mu_{1, \varphi} =  \frac{\lambda_{1}}{6|\mbox{det}A|}(n^{2}-5n+1 ).
\end{equation*}
Thus we conclude that $\alpha_{1, \varphi}$, which is defined in Theorem $\ref{Main-Theorem}$, is given by
\begin{equation*}
\alpha_{1, \varphi} = \frac{(n-1)^{2}\lambda_{1}}{6}|\mathrm{det}A|^{(2-n)/n} >0.
\end{equation*}
Thus the canonical flat metric $g_{\Gamma}$ is not $\overline{\lambda_{1}}$-conformally maximal.
\end{proof}
As we have remarked in Introduction, this result is a higher dimensional extension of Theorem \ref{Karpukhin's-Main-Theorem} by Karpukhin \cite{Karpukhin}.

\vspace{0.4in}

\textbf{Acknowledgements.} I would like to thank Professor Shin Nayatani for his constant encouragement and valuable comments on this work.

\end{document}